\newtheorem{thm}{Theorem}[section]
\newtheorem{prop}[thm]{Proposition}
\newtheorem*{remark}{Remark}
\title{An inverse problem for a fractional diffusion equation with fractional power type nonlinearities}
\author{Li Li}
\affil{Institute for Pure and Applied Mathematics, University of California,\\
Los Angeles, CA 90095, USA}
\date{}
\begin{document}
\maketitle

\noindent \textbf{ABSTRACT.}\, 
We study the well-posedness of a semi-linear fractional diffusion equation and formulate an associated inverse problem.
We determine fractional power type nonlinearities from the exterior partial measurements of the Dirichlet-to-Neumann map. Our arguments are based on a first order linearization as well as the parabolic Runge approximation property.

\section{Introduction}
Non-local equations involving the fractional Laplacian $(-\Delta)^s$ have attracted much attention in past decades. These equations have been widely used for description of anomalous diffusion and random processes with jumps in many disciplines such as probability theory, physics and finance. 

Correspondingly, inverse problems associated with fractional operators involving $(-\Delta)^s$ have also been extensively studied so far. The study in this direction was initiated in \cite{ghosh2020calderon}. The authors considered 
the linear fractional elliptic problem
$$((-\Delta)^s+ q)u= 0\,\,\, \text{in}\,\,\Omega,\qquad
u|_{\Omega_e}= g$$
where $0< s< 1$. $\Omega\subset \mathbb{R}^n$ is a bounded domain with smooth boundary and $\Omega_e:= \mathbb{R}^n\setminus \bar\Omega$. They showed that the electric potential $q$ in $\Omega$ 
can be determined from 
the partial knowledge of the Dirichlet-to-Neumann map 
$$\Lambda_q: g\to (-\Delta)^su|_{\Omega_e}.$$
See \cite{bhattacharyya2021inverse, cekic2020calderon, covi2020inverse,  covi2020higher, ghosh2017calderon, ghosh2020uniqueness, li2021determining, ruland2020fractional} for further results for linear fractional elliptic inverse problems. 

In recent years, inverse problems associated with linear fractional parabolic operators have been studied as well. See \cite{lai2020calderon} for
the study of an inverse problem associated with the fractional operator 
$(\partial_t- \Delta)^s+ q$. See \cite{li2021fractional} for
the study of an inverse problem associated with a fractional parabolic operator involving time-dependent magnetic and electric potentials.

In this paper, we will study a fractional parabolic operator involving fractional power type nonlinearities and an associated inverse problem. In fact, the combination of fractional diffusion and power type nonlinearities has given rise to interesting mathematical models which have a number of scientific applications. The well-known equation
$$\partial_t u+ (-\Delta)^s(|u|^{m-1}u)= 0,$$
for instance, describes anomalous diffusion through porous media. See \cite{vazquez2014recent} for more details.

Here we will focus on another semi-linear fractional parabolic problem
\begin{equation}\label{fracparafrac}
\left\{
\begin{aligned}
\partial_t u+ (-\Delta)^s u + a(x, t, u)&= 0
\quad \,\,\,\Omega\times (0, T)\\
u&= g\quad \,\,\,\Omega_e\times (0, T)\\
u&= 0\quad \,\,\,\mathbb{R}^n\times \{0\}
\end{aligned}
\right.
\end{equation}
where the nonlinearity satisfies
\begin{equation}\label{fracpower}
a(x, t, z)= \sum^m_{k=1}a_k(x, t)|z|^{b_k}z,
 \end{equation}
$0\leq a_k\in C(\bar{\Omega}\times [0, T])$ and the powers $0< b_1<\cdots< b_m$ are not necessarily integers.

We will show (\ref{fracparafrac}) is well-posed at least for regular and small enough $g$, which enables us to define the parabolic Dirichlet-to-Neumann map
\begin{equation}\label{DNmap}
\Lambda_{a}g:= (-\Delta)^s u|_{\Omega_e\times (0, T)}.
\end{equation}
Our goal is to determine the nonlinearity from the exterior partial measurements of $\Lambda_{a}$. 

We mention that inverse problems associated with nonlinear equations have been extensively studied as well. See \cite{sun1997inverse} for inverse problems associated with quasi-linear elliptic 
equations. See \cite{kian2020recovery} for inverse problems associated with parabolic equations involving general semi-linear terms. See \cite{kurylev2018inverse} for inverse problems associated with nonlinear hyperbolic equations. We also remark that the higher order linearization technique has been commonly applied to determine the full nonlinearity in dealing with inverse problems associated with power type nonlinear equations. See \cite{krupchyk2020remark, lassas2021inverse, liimatainen2020inverse} for this approach for elliptic problems. See \cite{lai2020inverse, lai2021inverse} for the higher order linearization approach for fractional elliptic problems.

In this paper, we will only use a first order linearization to determine all the coefficients $a_k$ based on the Runge approximation property of the linear fractional parabolic operator. This approach can be viewed as a parabolic analogue of the one in \cite{li2021inverse}.

The following theorem is the main result in this paper.
\begin{thm}
Let $W_1, W_2\subset \Omega_e$ be open. Let $a^{(1)}, a^{(2)}$ be nonlinearities of the form (\ref{fracpower}). Suppose
$$\Lambda_{a^{(1)}}g|_{W_2\times (0, T)}
= \Lambda_{a^{(2)}}g|_{W_2\times (0, T)}$$
for small $g\in C^\infty_c(W_1\times (0, T))$.
Then $a^{(1)}_k= a^{(2)}_k$ in $\Omega\times(0, T)$, $k= 1,\cdots, m$.
\end{thm}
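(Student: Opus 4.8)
The plan is to exploit the fractional-power structure of the nonlinearity by linearizing the Dirichlet-to-Neumann map along a one-parameter family of small exterior data and then reading off the coefficients $a_k$ one at a time, in increasing order of the exponents $b_k$. Concretely, I would fix a small $g\in C^\infty_c(W_1\times(0,T))$ and consider $u^{(j)}_\varepsilon$ solving \eqref{fracparafrac} with data $\varepsilon g$ and nonlinearity $a^{(j)}$. By the well-posedness and the smooth (or at least $C^1$) dependence on the data established earlier in the paper, the leading term of $u^{(j)}_\varepsilon$ as $\varepsilon\to 0$ is the solution $v$ of the linear problem $\partial_t v+(-\Delta)^s v=0$ in $\Omega\times(0,T)$, $v=g$ in $\Omega_e\times(0,T)$, $v=0$ at $t=0$, which is independent of $j$. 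The equality of the DN maps then forces the first-order correction terms to agree; because $a(x,t,z)=\sum_k a_k(x,t)|z|^{b_k}z$ has no linear-in-$z$ part, the true ``first order linearization'' in the classical sense vanishes, and instead the first nontrivial term in the $\varepsilon$-expansion is of order $\varepsilon^{1+b_1}$, governed by $a_1$. This is exactly the mechanism in \cite{li2021inverse}; the fractional-power exponents $b_k$ play the role of separating the asymptotic scales.

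Carrying this out, I would expand $u^{(j)}_\varepsilon = \varepsilon v + \varepsilon^{1+b_1} w^{(j)}_1 + (\text{lower order})$, where $w^{(j)}_1$ solves $\partial_t w + (-\Delta)^s w = -a^{(j)}_1(x,t)|v|^{b_1}v$ in $\Omega\times(0,T)$ with zero exterior data and zero initial data. The hypothesis $\Lambda_{a^{(1)}}(\varepsilon g)=\Lambda_{a^{(2)}}(\varepsilon g)$ on $W_2\times(0,T)$, after dividing by $\varepsilon^{1+b_1}$ and letting $\varepsilon\to 0$, yields $(-\Delta)^s w^{(1)}_1 = (-\Delta)^s w^{(2)}_1$ on $W_2\times(0,T)$. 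Since $w^{(1)}_1 - w^{(2)}_1$ vanishes in $\Omega_e$ and its $(-\Delta)^s$ trace vanishes on $W_2$, the weak unique continuation property for the fractional heat operator (the parabolic analogue of the elliptic UCP used in \cite{ghosh2020calderon}) forces $w^{(1)}_1 = w^{(2)}_1$ everywhere, hence it solves the linear fractional heat equation with source $(a^{(2)}_1 - a^{(1)}_1)|v|^{b_1}v$ and vanishing Cauchy-type data. Pairing against a solution $\tilde v$ of the adjoint (backward) linear fractional parabolic equation and integrating by parts over $\Omega\times(0,T)$ gives the orthogonality identity
\begin{equation}\label{orthog}
\int_0^T\!\!\int_\Omega \big(a^{(2)}_1(x,t) - a^{(1)}_1(x,t)\big)\,|v(x,t)|^{b_1}v(x,t)\,\tilde v(x,t)\,dx\,dt = 0
\end{equation}
for all such $v,\tilde v$ generated by exterior data supported in $W_1$ and $W_2$ respectively.

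To conclude $a^{(1)}_1 = a^{(2)}_1$ from \eqref{orthog}, I would invoke the parabolic Runge approximation property for the linear operator $\partial_t + (-\Delta)^s$: solutions with exterior data in $W_1\times(0,T)$ are dense (in a suitable $L^2$-type topology on $\Omega\times(0,T)$) in the space of all solutions, and similarly for the adjoint equation with data in $W_2\times(0,T)$. Using this density I can take $v$ and $\tilde v$ to approximate essentially arbitrary solution pairs; then a localization/variational argument — choosing $v$ concentrating near an arbitrary point $(x_0,t_0)$ and $\tilde v$ a suitable test profile, exploiting that $|v|^{b_1}v$ is an odd, nondegenerate function of $v$ so that the product $|v|^{b_1}v\,\tilde v$ can be made to have a definite sign on a small neighborhood — forces the continuous function $a^{(2)}_1 - a^{(1)}_1$ to vanish identically on $\Omega\times(0,T)$. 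With $a^{(1)}_1 = a^{(2)}_1$ in hand, the $\varepsilon^{1+b_1}$ terms cancel identically, and I repeat the argument at the next scale $\varepsilon^{1+b_2}$ (taking care of the intermediate mixed-order terms, which now only involve the already-determined $a_1$ and hence are common to both problems), obtaining $a^{(1)}_2 = a^{(2)}_2$, and so on inductively through $k=m$.

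The main obstacle I expect is twofold: first, making the asymptotic expansion of $u_\varepsilon$ rigorous to the required order — one must control the remainder in a norm strong enough that its $(-\Delta)^s$ exterior trace is controlled, which relies on the sharp well-posedness and smoothness-in-data estimates for \eqref{fracparafrac} and on carefully tracking how non-integer powers $b_k$ generate the hierarchy of scales $\varepsilon^{1+b_k}$ (and possibly cross terms like $\varepsilon^{(1+b_j)+\cdots}$ that must be shown to be genuinely higher order or to involve only lower-indexed coefficients). Second, the Runge approximation step for the \emph{parabolic} fractional operator, together with the unique continuation property, must be in place in the exact form needed — density in $\Omega\times(0,T)$ from one-sided (spatial) exterior data $W_1, W_2$ — and the final localization argument extracting pointwise vanishing of $a^{(2)}_k - a^{(1)}_k$ from the orthogonality relation needs the nondegeneracy of $z\mapsto |z|^{b_k}z$ near generic values of $v$. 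These are the technical heart of the paper; everything else is bookkeeping of the linearization.
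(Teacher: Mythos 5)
Your overall philosophy (scale separation via the powers $b_k$, unique continuation, Runge approximation) matches the paper's, but your architecture is different and has two concrete gaps. First, you apply unique continuation only to the correctors $w^{(j)}_1$ extracted from a second-order asymptotic expansion $u^{(j)}_\varepsilon=\varepsilon v+\varepsilon^{1+b_1}w^{(j)}_1+o(\varepsilon^{1+b_1})$, which forces you to justify that expansion in a norm strong enough to control the exterior trace of $(-\Delta)^s$ of the remainder --- a burden you acknowledge but do not discharge. The paper's key move, which you miss, is to apply the elliptic UCP of $(-\Delta)^s$ (slice by slice in $t$) directly to the difference of the two \emph{full nonlinear} solutions: since $u^{(1)}_{\lambda,g}$ and $u^{(2)}_{\lambda,g}$ have identical exterior data and identical exterior DN data on $W_2$, they coincide identically in $\mathbb{R}^n\times(0,T)$, whence one gets the \emph{pointwise} identity $a^{(1)}(x,t,u_{\lambda,g})=a^{(2)}(x,t,u_{\lambda,g})$ in $\Omega\times(0,T)$, i.e.\ $(a^{(1)}_1-a^{(2)}_1)|u_{\lambda,g}|^{b_1}u_{\lambda,g}=R^{(2)}_1-R^{(1)}_1$. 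This removes any need for the correctors, the adjoint solutions, and the integral identity; only the first-order statement $u_{\lambda,g}/\lambda\to u_g$ in $L^\infty$ is ever used. (Incidentally, even within your scheme, once $w^{(1)}_1=w^{(2)}_1$ you could subtract their equations to get $(a^{(2)}_1-a^{(1)}_1)|v|^{b_1}v=0$ pointwise; passing to the weaker orthogonality identity via an adjoint pairing only loses information.)

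Second, your concluding step --- extracting $a^{(1)}_1=a^{(2)}_1$ from $\int(a^{(2)}_1-a^{(1)}_1)|v|^{b_1}v\,\tilde v=0$ by Runge density plus a ``localization/variational argument'' --- does not go through as stated. Proposition 2.4 gives density of the traces $u_g|_{\Omega\times(0,T)}$ only in $L^2(\Omega\times(0,T))$, with no $L^\infty$ or higher-integrability control on the approximating solutions. Since $|v|^{b_1}v$ has homogeneity $b_1+1>1$, $L^2$ convergence $v_j\to f$ does not allow passage to the limit in $\int(a^{(2)}_1-a^{(1)}_1)|v_j|^{b_1}v_j\,\tilde v$ (the family $|v_j|^{b_1+1}$ need not be uniformly integrable), and pointwise sign control near a point $(x_0,t_0)$ is likewise unavailable from $L^2$ approximation alone. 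The paper circumvents exactly this obstruction by taking the $(b_1+1)$-th root of the pointwise identity: it estimates $\||a^{(1)}_1-a^{(2)}_1|^{1/(b_1+1)}\|_{L^2}$ by splitting $1=(1-u_{\lambda,g}/\lambda)+u_{\lambda,g}/\lambda$, so that $u_{\lambda,g}$ enters only \emph{linearly} (handled by the $L^2$-Runge approximation of the constant function $1$ together with Proposition 4.1), while the remainder term is bounded via $|R^{(2)}_1-R^{(1)}_1|^{1/(b_1+1)}\leq C\lambda^{(b_k+1)/(b_1+1)}$ and the $L^\infty$ bound $\|u_{\lambda,g}\|_{L^\infty}\leq\lambda\|g\|_{L^\infty}$, which vanishes as $\lambda\to 0$ because $b_k>b_1$. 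You would need to supply an analogous mechanism to close your argument; as written, the final density step is a genuine gap.
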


The rest of this paper is organized in the following way. In Section 2, we summarize the background knowledge. We show the well-posedness of  (\ref{fracparafrac}) in Section 3. We prove the main theorem in Section 4.~\\

\noindent \textbf{Acknowledgement.} The author would like to thank Professor Gunther Uhlmann for suggesting the problem and for helpful discussions.

\section{Preliminaries}

Throughout this paper we use the following notations.
\begin{itemize}
\item Fix the space dimension $n\geq 2$ and 
the fractional power $0< s< 1$.

\item Fix the constant $T> 0$ and $t$ denotes the time variable.

\item $\Omega$ denotes a bounded domain with smooth boundary and
$\Omega_e:= \mathbb{R}^n\setminus\bar{\Omega}$.

\item Let $u$ be an $(n+1)$-variable function. Then 
$u(t)$ denotes the $n$-variable function $u(\cdot, t)$.

\item $c, C, C', C_1,\cdots$ denote positive constants (which may depend on some parameters). We write $C_I$ when we emphasize that the constant $C$ depends on the parameter $I$.

\item $\langle\cdot, \cdot\rangle$ denotes the distributional pairing so formally, $\langle f, g\rangle= \int fg$.
\end{itemize}

\subsection{Function spaces}

Throughout this paper we refer all function spaces to real-valued function spaces.

For $r\in \mathbb{R}$, we have Sobolev spaces
$$H^r(\mathbb{R}^n):= \{f\in \mathcal{S}'(\mathbb{R}^n):
\int_{\mathbb{R}^n} (1+\vert \xi\vert^2)^r\vert \mathcal{F}f(\xi)\vert^2d\xi<\infty\}$$
where $\mathcal{F}$ is the Fourier transform and $\mathcal{S}'(\mathbb{R}^n)$ is the space of temperate distributions.

Let $U\subset \mathbb{R}^n$ be an open. 
We use $C^s(U)= C^{0, s}(U)$ to denote 
the H\"older space equipped with the standard norm
$$||f||_{C^s(U)}:= ||f||_{L^\infty(U)}+ 
\sup_{x\neq y, x,y\in U}\frac{|f(x)- f(y)|}{|x-y|^s}.$$
For $k\in \mathbb{N}$, the norm
$||\cdot||_{C^k(\mathbb{R}^n)}$ is defined by
$$||f||_{C^k(\mathbb{R}^n)}= 
\sum_{|\alpha|\leq k}||\partial^\alpha f||_{L^\infty(\mathbb{R}^n)}.$$

Let $X$ be a Banach space. We use $C([0, T]; X)$
to denote the space consisting of the corresponding Banach space-valued  continuous functions on $[0, T]$. 
$L^p(0, T; X)$ denotes the space consisting of the corresponding Banach space-valued $L^p$ functions, equipped with the standard norm
$$||u||_{L^p(0, T; X)}:= (\int^T_0||u(t)||^p_X\,dt)^{1/p}.$$

\subsection{The fractional Laplacian and the associated semi-group estimates}
For regular functions defined in $\mathbb{R}^n$, the non-local operator $(-\Delta)^s$ is given 
by the pointwise definition 
$$(-\Delta)^su(x):= 
c_{n,s}\lim_{\epsilon\to 0^+}
\int_{\mathbb{R}^n\setminus B_\epsilon(x)}\frac{u(x)-u(y)}{|x-y|^{n+2s}}\,dy$$
as well as the the equivalent bilinear form definition
$$\langle (-\Delta)^su, v\rangle:=  c'_{n,s}\int_{\mathbb{R}^n}\int_{\mathbb{R}^n}\frac
{(u(x)-u(y))(v(x)-v(y))}{|x-y|^{n+2s}}\,dxdy.$$

Let $\{S_\Omega(t)\}_{t\geq 0}$ be the semi-group on $L^2(\Omega)$ associated with the problem
\begin{equation}\label{bddsemigroup}
\left\{
\begin{aligned}
\partial_t u+ (-\Delta)^s u &= 0\quad \,\,\,\Omega\times \mathbb{R}_+\\
u&= 0\quad \,\,\,\Omega_e\times \mathbb{R}_+\\
u&= f\quad \,\,\,\mathbb{R}^n\times \{0\}.
\end{aligned}
\right.
\end{equation}
It has been proved in \cite{fernandez2016boundary} that for $f\in L^2(\Omega)$ and $t> 0$, $S_\Omega(t)f\in C^s(\mathbb{R}^n)$ and it is smooth in $t$. See \cite{felsinger2013local, silvestre2012holder} for more H\"older regularity results for fractional parabolic equations in bounded domains. Let $\{S(t)\}_{t\geq 0}$ be the semi-group associated with the problem
\begin{equation}\label{semigroup}
\left\{
\begin{aligned}
\partial_t u+ (-\Delta)^s u &= 0\quad \,\,\,\mathbb{R}^n\times \mathbb{R}_+\\
u&= f\quad \,\,\,\mathbb{R}^n\times \{0\}.
\end{aligned}
\right.
\end{equation}
By taking the Fourier transform, we have
$$S(t)f= K(\cdot, t)* f,\qquad K(x, t)= \int_{\mathbb{R}^n} e^{ix\cdot\xi- t|\xi|^{2s}}\,d\xi.$$
It has been shown that the associated kernel
$K(x, t)$ is smooth and positive in $\mathbb{R}^n\times \mathbb{R}_+$. See \cite{greco2016existence, miao2008well} for more details. The following estimate was given in Lemma 3.1 in \cite{miao2008well}.
\begin{prop}
Suppose $2\leq r\leq p\leq\infty$ and $f\in L^r(\mathbb{R}^n)$.
Then
$$||S(t)f||_{L^p(\mathbb{R}^n)}\leq Ct^{-\frac{n}{2s}(\frac{1}{r}- \frac{1}{p})}||f||_{L^r(\mathbb{R}^n)}.$$
\end{prop}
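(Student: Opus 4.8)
The plan is to treat $S(t)$ as a convolution operator and to exploit the exact parabolic scaling of its kernel. Since $S(t)f = K(\cdot, t) * f$, Young's convolution inequality gives, for any exponent $q \in [1, \infty]$ satisfying $1 + \frac{1}{p} = \frac{1}{q} + \frac{1}{r}$,
$$\|S(t)f\|_{L^p(\mathbb{R}^n)} \leq \|K(\cdot, t)\|_{L^q(\mathbb{R}^n)}\, \|f\|_{L^r(\mathbb{R}^n)}.$$
The constraint $2 \leq r \leq p \leq \infty$ forces $\frac{1}{q} = 1 - (\frac{1}{r} - \frac{1}{p}) \in [\tfrac{1}{2}, 1]$, so such a $q \in [1, 2]$ always exists; the whole estimate thus reduces to bounding $\|K(\cdot, t)\|_{L^q}$ with the correct power of $t$.

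Next I would record the scaling of the kernel. Changing variables $\xi = t^{-1/(2s)}\eta$ in the defining integral for $K$ yields the identity $K(x, t) = t^{-n/(2s)}\,\Phi(t^{-1/(2s)} x)$, where $\Phi(x) := K(x, 1)$ is the self-similar profile. A further substitution $y = t^{-1/(2s)} x$ in the $L^q$ integral then gives the exact homogeneity
$$\|K(\cdot, t)\|_{L^q(\mathbb{R}^n)} = t^{-\frac{n}{2s}\left(1 - \frac{1}{q}\right)}\,\|\Phi\|_{L^q(\mathbb{R}^n)}.$$
Since $1 - \frac{1}{q} = \frac{1}{r} - \frac{1}{p}$ by the choice of $q$, combining this with Young's inequality produces precisely the claimed bound with $C = \|\Phi\|_{L^q(\mathbb{R}^n)}$, provided this constant is finite.

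The one substantive point, and the main obstacle, is therefore to verify that the profile $\Phi = K(\cdot, 1)$ lies in $L^q(\mathbb{R}^n)$ for every $q \in [1, \infty]$. Boundedness is immediate, since $|\Phi(x)| \leq \int_{\mathbb{R}^n} e^{-|\eta|^{2s}}\, d\eta < \infty$ because $e^{-|\eta|^{2s}}$ is integrable for $s > 0$; this already gives $\Phi \in L^\infty$. For $L^1$ (and hence, by interpolation, all intermediate $L^q$) I would invoke the sharp pointwise decay of the fractional heat kernel, namely $0 < \Phi(x) \lesssim (1 + |x|)^{-(n+2s)}$, which is the well-documented heavy-tailed behavior of $(-\Delta)^s$-diffusion recorded in the references cited just above the statement. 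Since $n + 2s > n$, the tail $(1+|x|)^{-(n+2s)}$ is integrable, so $\Phi \in L^1 \cap L^\infty \subset L^q$ for all $q \in [1, \infty]$, which closes the argument. The polynomial rather than Gaussian decay is the feature that must be handled with care and is what distinguishes this from the classical heat semigroup, though for the present statement one only needs the qualitative integrability of the tail, not its precise rate.
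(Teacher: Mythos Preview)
Your argument is correct: Young's convolution inequality together with the exact self-similar scaling $K(x,t)=t^{-n/(2s)}\Phi(t^{-1/(2s)}x)$ reduces the estimate to the finiteness of $\|\Phi\|_{L^q}$, and the known bound $0<\Phi(x)\lesssim (1+|x|)^{-(n+2s)}$ (which is indeed recorded in the references the paper cites for the kernel) gives $\Phi\in L^1\cap L^\infty$ and hence $\Phi\in L^q$ for every $q\in[1,\infty]$. The paper itself does not supply a proof of this proposition but simply quotes it as Lemma~3.1 of \cite{miao2008well}; your approach is precisely the standard one used there, so there is nothing to contrast.
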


Now we identify $f\in L^r(\Omega)$ with its zero extension as a function in $L^r(\mathbb{R}^n)$ and
consider $\tilde{u}:= S(t)|f|- S_\Omega(t)|f|$. Note that $\tilde{u}$ solves the fractional parabolic equation in $\Omega\times \mathbb{R}_+$, $\tilde{u}\geq 0$ in $\Omega_e\times \mathbb{R}_+$ and $\tilde{u}(0)= 0$. By the fractional parabolic maximum principle (see for instance, Lemma 3.2 in \cite{barrios2014widder}), we have
$$|S_\Omega(t)f|\leq S_\Omega(t)|f|\leq S(t)|f|$$
in $\Omega\times \mathbb{R}_+$. Hence the following estimate immediately follows from the previous one.

\begin{prop}
Suppose $2\leq r\leq p\leq\infty$ and $f\in L^r(\Omega)$.
Then
$$||S_\Omega(t)f||_{L^p(\Omega)}\leq Ct^{-\frac{n}{2s}(\frac{1}{r}- \frac{1}{p})}||f||_{L^r(\Omega)}.$$
\end{prop}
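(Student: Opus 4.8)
The plan is to deduce the bounded-domain estimate directly from the whole-space estimate in the previous Proposition, using nothing more than the pointwise comparison $|S_\Omega(t)f|\le S(t)|f|$ recorded just above the statement together with the monotonicity of the $L^p$ norm. No fresh analysis of the semi-group $S_\Omega$ itself is required; the entire content is transported from $\mathbb{R}^n$ to $\Omega$.

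First I would justify the pointwise chain $|S_\Omega(t)f|\le S_\Omega(t)|f|\le S(t)|f|$ in $\Omega\times\mathbb{R}_+$. The second inequality is exactly the displayed comparison preceding the statement: the function $\tilde u:=S(t)|f|-S_\Omega(t)|f|$ solves the homogeneous fractional heat equation in $\Omega\times\mathbb{R}_+$, is nonnegative on the exterior $\Omega_e\times\mathbb{R}_+$ (there $S_\Omega(t)|f|$ vanishes while $S(t)|f|\ge 0$ because the whole-space kernel $K$ is positive), and has vanishing initial data, so the fractional parabolic maximum principle forces $\tilde u\ge 0$. For the first inequality I would apply the same maximum principle to $S_\Omega(t)(|f|\pm f)=S_\Omega(t)|f|\pm S_\Omega(t)f$: since $|f|\pm f\ge 0$ and $S_\Omega$ preserves nonnegativity (again by the maximum principle), both quantities $S_\Omega(t)|f|\pm S_\Omega(t)f$ are nonnegative, which is precisely $|S_\Omega(t)f|\le S_\Omega(t)|f|$.

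With the pointwise bound in hand, I would integrate over $\Omega$. For $p<\infty$, raising $|S_\Omega(t)f|\le S(t)|f|$ to the $p$-th power and integrating over $\Omega$ gives $\|S_\Omega(t)f\|_{L^p(\Omega)}\le\|S(t)|f|\|_{L^p(\Omega)}\le\|S(t)|f|\|_{L^p(\mathbb{R}^n)}$, where the last step merely enlarges the domain of integration; the case $p=\infty$ is identical with suprema in place of integrals. Identifying $f$ with its zero extension, the nonnegative function $|f|$ lies in $L^r(\mathbb{R}^n)$ with $\||f|\|_{L^r(\mathbb{R}^n)}=\|f\|_{L^r(\Omega)}$, so the previous Proposition applied to $|f|$ yields $\|S(t)|f|\|_{L^p(\mathbb{R}^n)}\le Ct^{-\frac{n}{2s}(\frac1r-\frac1p)}\|f\|_{L^r(\Omega)}$, and chaining the two displays completes the proof.

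The argument is essentially immediate once the comparison principle is available, so I do not anticipate a serious obstacle. The only points demanding care are verifying the hypotheses of the maximum principle for $\tilde u$ — where positivity of the whole-space kernel $K$ is what guarantees the correct exterior sign — and noting that zero extension is isometric on $L^r$ while restriction to $\Omega$ can only decrease an $L^p$ norm, so that the constant $C$ is inherited unchanged from the whole-space estimate and in particular remains independent of $f$.
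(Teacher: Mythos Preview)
Your proposal is correct and follows essentially the same route as the paper: the paper establishes the pointwise comparison $|S_\Omega(t)f|\le S_\Omega(t)|f|\le S(t)|f|$ via the fractional parabolic maximum principle in the paragraph immediately preceding the statement, and then simply says the estimate ``immediately follows from the previous one.'' Your writeup spells out the integration step and the justification of the first inequality in a bit more detail, but the idea is identical.
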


\subsection{The Runge approximation property}
The following unique continuation property of $(-\Delta)^s$ was first established in Theorem 1.2 in \cite{ghosh2020calderon}. 
\begin{prop}
Let $0< s< 1$ and $u\in H^r(\mathbb{R}^n)$ for some $r\in \mathbb{R}$. Let $W\subset \mathbb{R}^n$ be open. If $$(-\Delta)^su= u= 0\quad\text{in}\,\,W,$$
then $u= 0$ in $\mathbb{R}^n$.
\end{prop}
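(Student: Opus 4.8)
The plan is to prove the unique continuation property by realizing $(-\Delta)^s$ as the weighted Dirichlet-to-Neumann map of a local degenerate elliptic operator through the Caffarelli--Silvestre extension, and then applying a boundary unique continuation principle for that operator. First I would pass to the upper half space $\mathbb{R}^{n+1}_+= \{(x,y): x\in\mathbb{R}^n,\, y> 0\}$ and associate to $u$ its extension $U$ solving the degenerate equation
$$\nabla\cdot(y^{1-2s}\nabla U)= 0\quad\text{in }\mathbb{R}^{n+1}_+,\qquad U(\cdot, 0)= u,$$
whose defining feature is that the weighted conormal derivative recovers the fractional Laplacian, $-c_s\lim_{y\to 0^+}y^{1-2s}\partial_y U= (-\Delta)^s u$ as distributions on $\mathbb{R}^n$, for a dimensional constant $c_s> 0$. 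Under the hypothesis $u= (-\Delta)^s u= 0$ in the open set $W$, the Dirichlet trace $U|_{W\times\{0\}}$ and the weighted Neumann trace $\lim_{y\to 0^+}y^{1-2s}\partial_y U|_{W\times\{0\}}$ both vanish, so $U$ carries vanishing Cauchy data on the flat boundary patch $W\times\{0\}$.

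Second, I would invoke strong unique continuation from the boundary for the operator $L_sU:= \nabla\cdot(y^{1-2s}\nabla U)$. The weight $y^{1-2s}$ belongs to the Muckenhoupt class $A_2$, so $L_s$ is degenerate elliptic and admits a Carleman estimate of the type established by R\"uland; this estimate forces a solution of $L_sU= 0$ with vanishing Dirichlet and weighted Neumann data on $W\times\{0\}$ to vanish in a full half-neighborhood of $W\times\{0\}$ inside $\overline{\mathbb{R}^{n+1}_+}$. Since $L_s$ is a smooth, uniformly elliptic operator on compact subsets of the connected interior $\{y> 0\}$, the classical weak unique continuation principle then propagates the vanishing from that neighborhood throughout all of $\mathbb{R}^{n+1}_+$, giving $U\equiv 0$. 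Restricting to $y= 0$ yields $u= U(\cdot, 0)= 0$ in $\mathbb{R}^n$, as claimed.

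I expect the main obstacle to be the boundary Carleman estimate for $L_s$. Because the weight $y^{1-2s}$ degenerates (when $s< 1/2$) or blows up (when $s> 1/2$) as $y\to 0$, one cannot simply quote a standard elliptic Carleman inequality: the weight function driving the estimate must be chosen to match the degeneracy, and the boundary terms generated by the weighted conormal derivative must be controlled so that vanishing Cauchy data genuinely suppresses them. A secondary technical point is the very weak hypothesis $u\in H^r(\mathbb{R}^n)$ for arbitrary $r\in\mathbb{R}$, which requires the extension and its two traces to be meaningful at low regularity. I would dispose of this by a localization argument: the global low regularity of $u$ only affects $U$ away from $W$, whereas near $W\times\{0\}$ the Dirichlet data vanishes, so interior and boundary regularity for the degenerate equation make $U$ smooth up to $W\times\{0\}$ and render the Cauchy data vanishing rigorous there; the low-regularity behavior elsewhere is irrelevant to the local-to-global propagation. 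With these two ingredients in place the conclusion follows as outlined, and the full details are carried out in \cite{ghosh2020calderon}.
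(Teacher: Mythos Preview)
Your proposal is correct and follows the same strategy as the original proof in \cite{ghosh2020calderon}, which the paper simply cites rather than reproving: the Caffarelli--Silvestre extension converts the hypothesis into vanishing Cauchy data for the degenerate elliptic operator $\nabla\cdot(y^{1-2s}\nabla U)=0$, a boundary Carleman estimate yields local vanishing, and interior unique continuation propagates it globally. Since the paper offers no independent argument, there is nothing further to compare.
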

This result was later extended for the fractional Laplacian when the fractional power belongs to $(-n/2, \infty)\setminus \mathbb{Z}$. See \cite{covi2020unique} for more details.

Based on this unique continuation property, the following parabolic Runge approximation property was established in Theorem 2 in \cite{ruland2020quantitative}.
\begin{prop}
Let $W\subset \Omega_e$ be open. Then the set
$$S:= \{u_g|_{\Omega\times (0, T)}: g\in C^\infty_c(W\times (0, T))\}$$
is dense in $L^2(\Omega\times (0, T))$. Here $u_g$ is the solution corresponding to the exterior data $g$ of the linear fractional parabolic problem
\begin{equation}\label{linearpara}
\left\{
\begin{aligned}
\partial_t u+ (-\Delta)^s u&= 0\quad \,\,\,\Omega\times (0, T)\\
u&= g\quad \,\,\,\Omega_e\times (0, T)\\
u&= 0\quad \,\,\,\mathbb{R}^n\times \{0\}.
\end{aligned}
\right.
\end{equation}
\end{prop}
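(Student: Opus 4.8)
The plan is to argue by duality. Since $u_g$ depends linearly on $g$, the set $S$ is a linear subspace of $L^2(\Omega\times(0,T))$, so by the Hahn--Banach theorem it suffices to show that its annihilator is trivial: if $f\in L^2(\Omega\times(0,T))$ satisfies
$$\int_0^T\int_\Omega f\,u_g\,dx\,dt=0\qquad\text{for every }g\in C^\infty_c(W\times(0,T)),$$
then $f=0$ (a.e.). To convert this orthogonality into information about $f$, I would introduce the solution $v$ of the adjoint, backward-in-time problem
$$-\partial_t v+(-\Delta)^s v=f\ \text{ in }\Omega\times(0,T),\qquad v=0\ \text{ in }\Omega_e\times(0,T),\qquad v(T)=0,$$
whose well-posedness follows from that of the forward problem \eqref{linearpara} after the change of variables $t\mapsto T-t$. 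Here $f$ is extended by zero to $\mathbb{R}^n$, and I would work with $v$ in the natural solution space, say $v\in L^2(0,T;\tilde H^s(\Omega))$ with $\partial_t v\in L^2(0,T;H^{-s}(\Omega))$, where $\tilde H^s(\Omega)$ denotes the functions in $H^s(\mathbb{R}^n)$ vanishing in $\Omega_e$.

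The heart of the argument is a Green-type identity linking $u_g$ and $v$. First I would record the weak formulation of \eqref{linearpara}: because $v(\cdot,t)\in\tilde H^s(\Omega)$ is an admissible test function, testing the forward equation against $v$ and integrating in time gives
$$\int_0^T\big[\langle\partial_t u_g,v\rangle+ B(u_g,v)\big]\,dt=0,$$
where $B(\cdot,\cdot)$ is the symmetric bilinear form of $(-\Delta)^s$ on $\mathbb{R}^n$ from the bilinear-form definition in Section 2.2. Integrating the time derivative by parts and using the symmetry $B(u_g,v)=B(v,u_g)$ together with the endpoint conditions $u_g(0)=0$ and $v(T)=0$ to discard the temporal boundary terms, the left-hand side becomes $\int_0^T\langle u_g,\,-\partial_t v+(-\Delta)^s v\rangle\,dt$, the spatial pairing taken over all of $\mathbb{R}^n$. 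I then split this pairing over $\Omega$ and $\Omega_e$: on $\Omega$ the bracket equals $f$ by the adjoint equation; on $\Omega_e$ we have $v\equiv0$, so $\partial_t v=0$ there and the bracket reduces to $(-\Delta)^s v$, while $u_g=g$. This produces the identity
$$0=\int_0^T\int_\Omega f\,u_g\,dx\,dt+\int_0^T\langle g,(-\Delta)^s v\rangle_{\Omega_e}\,dt.$$

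By the assumed orthogonality the first term vanishes, so $\int_0^T\langle g,(-\Delta)^s v\rangle_{\Omega_e}\,dt=0$ for every $g\in C^\infty_c(W\times(0,T))$; since $g$ is arbitrary this forces $(-\Delta)^s v=0$ in $W\times(0,T)$. Next I would freeze the time variable: for a.e.\ $t\in(0,T)$ the slice $v(\cdot,t)$ belongs to some $H^r(\mathbb{R}^n)$ and satisfies both $v(\cdot,t)=0$ in $W\subset\Omega_e$ and $(-\Delta)^s v(\cdot,t)=0$ in $W$, so the unique continuation property of $(-\Delta)^s$ stated above yields $v(\cdot,t)=0$ in $\mathbb{R}^n$. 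Hence $v\equiv0$ on $\mathbb{R}^n\times(0,T)$, and substituting this back into the adjoint equation gives $f=-\partial_t v+(-\Delta)^s v=0$ in $\Omega\times(0,T)$, which is what we wanted.

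The main obstacle is making the Green identity fully rigorous, that is, guaranteeing that the forward and adjoint solutions have enough regularity for the integration by parts in time and for the slice-wise application of spatial unique continuation. Concretely, I would need to pin down the correct solution spaces so that the endpoint terms $\langle u_g(T),v(T)\rangle$ and $\langle u_g(0),v(0)\rangle$ genuinely vanish, so that the nonlocal term can be split over $\Omega$ and $\Omega_e$ with the exterior pairing $\langle g,(-\Delta)^s v\rangle_{\Omega_e}$ well defined, and so that the interior smoothing places $v(\cdot,t)$ in a Sobolev space $H^r(\mathbb{R}^n)$ to which the unique continuation property applies for a.e.\ $t$. Once these regularity bookkeeping points are settled, the duality and unique continuation steps close the argument.
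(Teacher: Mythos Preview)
The paper does not actually prove this proposition; it is quoted as Theorem~2 of \cite{ruland2020quantitative} and simply invoked. Your duality argument (Hahn--Banach, backward adjoint problem, Green-type identity, then slice-wise unique continuation via Proposition~2.3) is precisely the standard route used in that reference, and your outline is correct, including your honest identification of the regularity bookkeeping as the only remaining work. So there is no discrepancy of approach to discuss: you have reconstructed the proof the paper defers to the literature.
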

This result was later extended for more general fractional parabolic operators (see Proposition 4.2 in \cite{li2021fractional}). See \cite{dipierro2019local} for $C^k$-type approximation results for fractional heat equations.

We remark that both properties are typical non-local phenomenons. They enable us to obtain strong results for inverse problems associated with fractional operators.

\section{The forward problem}
\subsection{Existence}
We say $(q, p, r)$ is admissible if $1< r\leq p< nr/(n-2s)$ and 
$$\frac{1}{q}= \frac{n}{2s}(\frac{1}{r}- \frac{1}{p}).$$
We also define
$$(Gf)(x, t):= \int^t_0 S_\Omega(t-\tau)f(x, \tau)\,d\tau.$$
We first prove the following two estimates, which are analogues of 
estimates established in Lemma 3.3 in \cite{miao2008well}.
\begin{prop}
Suppose $b>0$, $r> \frac{nb}{2s}$, $2(b+1)\leq  p< r(b+1)$
and $(q, p, r)$ is admissible. Then
$$||Gf||_{L^\infty(0, T; L^r(\Omega))}\leq CT^{1-\frac{nb}{2rs}}||f||_{L^{\frac{q}{b+1}}(0, T; L^\frac{p}{b+1}(\Omega))},$$
$$||Gf||_{L^q(0, T; L^p(\Omega))}\leq CT^{1-\frac{nb}{2rs}}||f||_{L^{\frac{q}{b+1}}(0, T; L^\frac{p}{b+1}(\Omega))}.$$
\end{prop}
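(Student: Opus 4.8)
The plan is to estimate the two norms of $Gf$ directly from the semigroup bound in Proposition 2.5, by first controlling the spatial norm of $(Gf)(t)$ pointwise in $t$ via Minkowski's inequality for integrals and then handling the $t$-integration as a one-dimensional convolution estimate. Write $p' := p/(b+1)$ and $r' := p$ when estimating the $L^\infty_t L^r_x$ norm (respectively $r' := p$ for the $L^q_t L^p_x$ norm), so that in both cases the exponents on the spatial side run from $p/(b+1)$ up to either $r$ or $p$. The key observation is that the hypotheses $r > nb/(2s)$ and $p < r(b+1)$ guarantee $2 \le p/(b+1) \le r$ and $2 \le p/(b+1) \le p$, so Proposition 2.5 (applied with the bounded-domain semigroup $S_\Omega$) yields
$$\|S_\Omega(t-\tau)f(\tau)\|_{L^r(\Omega)} \le C(t-\tau)^{-\frac{n}{2s}(\frac{b+1}{p} - \frac{1}{r})}\|f(\tau)\|_{L^{p/(b+1)}(\Omega)}$$
and similarly with $r$ replaced by $p$ on the left, where the exponent becomes $-\frac{n}{2s}\cdot\frac{b}{p}$.

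First I would treat the $L^\infty_t L^r_x$ estimate. By Minkowski's inequality,
$$\|(Gf)(t)\|_{L^r(\Omega)} \le \int_0^t \|S_\Omega(t-\tau)f(\tau)\|_{L^r(\Omega)}\,d\tau \le C\int_0^t (t-\tau)^{-\alpha}\|f(\tau)\|_{L^{p/(b+1)}(\Omega)}\,d\tau,$$
where $\alpha = \frac{n}{2s}(\frac{b+1}{p}-\frac{1}{r})$. I would then check, using $\frac{1}{q} = \frac{n}{2s}(\frac{1}{r}-\frac{1}{p})$, that $\alpha = \frac{n b}{2 r s} \cdot \frac{r}{p} \cdot \text{(something)}$; more precisely one computes $\alpha + \frac{b+1}{q} = \frac{n}{2s}\cdot\frac{b+1}{p} - \frac{n}{2s}\cdot\frac1r + (b+1)\cdot\frac{n}{2s}(\frac1r-\frac1p) = \frac{nb}{2rs}\cdot\text{(const)}$, and in any event the point is that applying H\"older in $\tau$ with the conjugate pair built from $L^{q/(b+1)}_\tau$ leaves a residual power $(t-\tau)$-integral of the form $\int_0^t(t-\tau)^{-\alpha q/(q-b-1)}\,d\tau$, which converges precisely because the admissibility and range conditions force $\alpha < 1 - (b+1)/q$, i.e. the singularity is integrable. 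Evaluating this integral produces the factor $T^{1-\frac{nb}{2rs}}$ after simplification. Taking the supremum over $t\in(0,T)$ gives the first inequality.

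Second, for the $L^q_t L^p_x$ bound I would apply the same pointwise-in-$t$ estimate but now with $L^p$ on the spatial side, giving
$$\|(Gf)(t)\|_{L^p(\Omega)} \le C\int_0^t (t-\tau)^{-\frac{nb}{2ps}}\|f(\tau)\|_{L^{p/(b+1)}(\Omega)}\,d\tau,$$
and then take the $L^q(0,T)$ norm in $t$. This is a convolution of the kernel $t^{-nb/(2ps)}\mathbf{1}_{(0,T)}$ with the function $\tau\mapsto \|f(\tau)\|_{L^{p/(b+1)}}$, so I would invoke Young's inequality for convolutions: with $\frac{1}{q} + 1 = \frac{1}{q/(b+1)} + \frac{1}{\rho}$ for the appropriate $\rho$, the kernel must lie in $L^\rho(0,T)$, which requires $\rho\cdot\frac{nb}{2ps} < 1$ — again a consequence of $r > nb/(2s)$ and the range of $p$ — and the resulting norm of the truncated kernel contributes exactly $T^{1-\frac{nb}{2rs}}$. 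Combining gives the second inequality.

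The main obstacle I anticipate is purely bookkeeping: verifying that the numerology of the exponents is consistent, namely that (i) $p/(b+1) \ge 2$ and $p/(b+1) \le \min(r,p)$ so that Proposition 2.5 applies; (ii) the time-singularity exponents $\frac{n}{2s}(\frac{b+1}{p}-\frac1r)$ and $\frac{nb}{2ps}$ are strictly less than the critical value making the $\tau$-integral (respectively the Young convolution) converge; and (iii) the leftover power of $T$ collapses to $1 - \frac{nb}{2rs}$ in both cases. Each of these reduces to elementary inequalities among $n, s, b, p, r, q$ that follow mechanically from "$(q,p,r)$ admissible", "$r > nb/(2s)$", and "$2(b+1)\le p < r(b+1)$"; the structure of the argument itself is the standard Kato–Fujita fixed-point machinery adapted to the $S_\Omega$ semigroup via the maximum-principle comparison already recorded before Proposition 2.5.
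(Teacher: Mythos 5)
Your proposal is correct and follows essentially the same route as the paper: Minkowski's inequality plus the $S_\Omega$ decay estimate (Proposition 2.2 in the paper, not 2.5) to reduce to a singular time integral, then H\"older in $\tau$ for the $L^\infty_t L^r_x$ bound and Young's convolution inequality for the $L^q_t L^p_x$ bound, with the exponent bookkeeping collapsing to $T^{1-\frac{nb}{2rs}}$ exactly as you indicate (indeed $\alpha+\frac{b+1}{q}=\frac{nb}{2rs}$, so the integrability conditions reduce precisely to $r>\frac{nb}{2s}$). No gaps beyond the arithmetic you already flag as mechanical.
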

\begin{proof}
First we note that $\frac{b+1}{q}< \frac{nb}{2rs}< 1$ since $(q, p, r)$ is admissible and $p< r(b+ 1)$.

To prove the first inequality, 
we apply Proposition 2.2 and H\"older inequality to obtain
$$||(Gf)(t)||_{L^r(\Omega)}\leq 
\int^t_0 ||S_\Omega(t-\tau)f(\tau, x)||_{L^r(\Omega)}\,d\tau$$
$$\leq C\int^t_0(t-\tau)^{-\frac{n}{2s}(\frac{b+1}{p}- \frac{1}{r})}
||f(\tau)||_{L^\frac{p}{b+1}(\Omega)}\,d\tau$$
$$\leq C(\int^t_0(t-\tau)^{-\frac{n}{2s}(\frac{b+1}{p}- \frac{1}{r})l}\,d\tau)^\frac{1}{l}||f||_{L^{\frac{q}{b+1}}(0, T; L^\frac{p}{b+1}(\Omega))}$$
$$= C't^{\frac{1}{l}-\frac{n}{2s}(\frac{b+1}{p}- \frac{1}{r})} ||f||_{L^{\frac{q}{b+1}}(0, T; L^\frac{p}{b+1}(\Omega))}
= C't^{1-\frac{nb}{2rs}}||f||_{L^{\frac{q}{b+1}}(0, T; L^\frac{p}{b+1}(\Omega))}$$
for each $t\in (0, T)$ where $l$ satisfies $\frac{1}{l}+ \frac{b+1}{q}= 1$. Hence we have
$$||(Gf)(t)||_{L^r(\Omega)}\leq C'T^{1-\frac{nb}{2rs}}||f||_{L^{\frac{q}{b+1}}(0, T; L^\frac{p}{b+1}(\Omega))}$$
for $t\in (0, T)$ since $1-\frac{nb}{2rs}> 0$ so the first inequality has been proved
.

To prove the second inequality, we apply Proposition 2.2 to obtain
$$||Gf||_{L^q(0, T; L^p(\Omega))}\leq 
||\int^t_0 ||S_\Omega(t-\tau)f(\tau, x)||_{L^p(\Omega)}\,d\tau||_{L^q}$$
$$\leq C||\int^t_0(t-\tau)^{-\frac{n}{2s}(\frac{b+1}{p}- \frac{1}{p})}
||f(\tau)||_{L^\frac{p}{b+1}(\Omega)}\,d\tau||_{L^q}$$

By Young's convolution inequality, for $t\in (0, T)$ we have
$$||\int^t_0(t-\tau)^{-\frac{nb}{2sp}}
||f(\tau)||_{L^\frac{p}{b+1}(\Omega)}\,d\tau||_{L^q}
\leq ||\tilde{f}* \tilde{g}||_{L^q}\leq ||\tilde{f}||_{L^\frac{q}{b+1}}||\tilde{g}||_{L^l}$$
$$=(\int^T_0 t^{-\frac{nb}{2ps}\cdot l}\,dt)^{\frac{1}{l}}||f||_{L^{\frac{q}{b+1}}(0, T; L^\frac{p}{b+1}(\Omega))}
= C'T^{1-\frac{nb}{2rs}}
||f||_{L^{\frac{q}{b+1}}(0, T; L^\frac{p}{b+1}(\Omega))}$$
where $l$ satisfies $1+ \frac{1}{q}= \frac{1+b}{q}+ \frac{1}{l}$, $\tilde{f}, \tilde{g}$
are defined to be zeros outside $(0, T)$ and for $t\in (0, T)$,
$$\tilde{f}(t):= ||f(t)||_{L^\frac{p}{b+1}(\Omega)},\quad 
\tilde{g}(t):= t^{-\frac{nb}{2ps}}.$$

\end{proof}

Now we consider the semi-linear fractional problem
\begin{equation}\label{nonhomopara}
\left\{
\begin{aligned}
\partial_t u+ (-\Delta)^s u + a(x, t, u)&= g\quad \,\,\,\Omega\times (0, T)\\
u&= 0\quad \,\,\,\Omega_e\times (0, T)\\
u&= 0\quad \,\,\,\mathbb{R}^n\times \{0\}
\end{aligned}
\right.
\end{equation}
where the nonlinearity has the form (\ref{fracpower}). 

\begin{remark}
Note that for fixed powers $0< b_1<\cdots< b_m$, we can choose a large $r$ s.t. $r> \max\{\frac{nb_m}{2s}, 2(b_m+ 1)\}$. For this chosen $r$, we can choose $p>r$ s.t.
$\frac{p}{r}< \min\{\frac{n}{n-2s}, b_1+1\}$. Then $p$ and $r$ satisfy all conditions in Proposition 3.1 for $b= b_k$ ($k= 1,\cdots,m$). 
\end{remark}

From now on we fix our choices for $p$ and $r$.

We define the nonlinear map 
$$(Fu)(x, t):= \int^t_0 S_\Omega(t-\tau)(g(x, \tau)- a(x, \tau, u(x, \tau)))\,d\tau$$
so the equation in (\ref{nonhomopara}) can be converted to the integral equation $u= Fu$ (see for instance, Section 15.1 in \cite{TaylorPDE3}).
\begin{prop}
For sufficiently small $g$, (\ref{nonhomopara}) has a solution in the space
$$X:= C([0, T]; L^r(\Omega))\cap L^q(0, T; L^p(\Omega)).$$
Here $X$ is equipped with the norm 
$$||\cdot||_X:= ||\cdot||_{L^q(0, T; L^p(\Omega))}+
||\cdot||_{L^\infty(0, T; L^r(\Omega))}.$$
\end{prop}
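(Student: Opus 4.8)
The plan is to solve the fixed point equation $u = Fu$ via the contraction mapping principle on a closed ball in $X$. First I would verify that $F$ maps a small ball $B_\rho := \{u \in X : \|u\|_X \le \rho\}$ into itself. Writing $Fu = F_0 - G(a(\cdot,\cdot,u))$ where $F_0(x,t) := \int_0^t S_\Omega(t-\tau)g(x,\tau)\,d\tau$, the linear term $F_0$ is controlled by the smoothing estimates of Proposition 2.2 (it is bounded in $X$ by a constant times a suitable norm of $g$, hence small when $g$ is small). For the nonlinear term, I would apply Proposition 3.1 with $b = b_k$: since $a(x,t,z) = \sum_{k=1}^m a_k(x,t)|z|^{b_k}z$ and $0 \le a_k \in C(\bar\Omega \times [0,T])$, the function $f_k := a_k |u|^{b_k}u$ satisfies $|f_k| \le \|a_k\|_{L^\infty} |u|^{b_k+1}$, so that
\begin{equation*}
\|f_k\|_{L^{q/(b_k+1)}(0,T;L^{p/(b_k+1)}(\Omega))} \le C \|u\|_{L^q(0,T;L^p(\Omega))}^{b_k+1} \le C\|u\|_X^{b_k+1}.
\end{equation*}
Here the exponent bookkeeping works precisely because $\||u|^{b_k+1}\|_{L^{p/(b_k+1)}} = \|u\|_{L^p}^{b_k+1}$ and similarly in time. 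Applying Proposition 3.1 then gives $\|G(f_k)\|_X \le C T^{1-nb_k/(2rs)} \|u\|_X^{b_k+1}$, and summing over $k$ yields $\|G(a(\cdot,\cdot,u))\|_X \le C \sum_{k=1}^m \|u\|_X^{b_k+1}$. Thus on $B_\rho$ we get $\|Fu\|_X \le C\|g\|_* + C\sum_k \rho^{b_k+1}$; choosing $\rho$ small and then $\|g\|_*$ small enough forces this to be $\le \rho$.

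Next I would establish that $F$ is a contraction on $B_\rho$ after possibly shrinking $\rho$. The key pointwise inequality is that for the scalar function $\phi_b(z) = |z|^b z$ one has $|\phi_b(z_1) - \phi_b(z_2)| \le C_b(|z_1|^b + |z_2|^b)|z_1 - z_2|$ for all real $z_1, z_2$ (this holds for every $b > 0$, integer or not, by the mean value theorem applied to the $C^1$ function $\phi_b$ together with $|\phi_b'(z)| = (b+1)|z|^b$). Hence
\begin{equation*}
|a(x,t,u_1) - a(x,t,u_2)| \le C \sum_{k=1}^m a_k(x,t) (|u_1|^{b_k} + |u_2|^{b_k}) |u_1 - u_2|.
\end{equation*}
I would then estimate $\||u_j|^{b_k}|u_1-u_2|\|_{L^{q/(b_k+1)}(0,T;L^{p/(b_k+1)}(\Omega))}$ by a Hölder inequality in both space and time, splitting the exponent $\tfrac{b_k+1}{p} = \tfrac{b_k}{p} + \tfrac1p$ (and similarly in time), to bound it by $\|u_j\|_X^{b_k} \|u_1 - u_2\|_X$. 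Feeding this into Proposition 3.1 gives $\|Fu_1 - Fu_2\|_X \le C\sum_k (\|u_1\|_X^{b_k} + \|u_2\|_X^{b_k}) \|u_1 - u_2\|_X \le C\sum_k \rho^{b_k} \|u_1-u_2\|_X$, which is a contraction once $\rho$ is small. The Banach fixed point theorem then produces the unique solution $u \in B_\rho \subset X$.

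Finally I would note that the resulting $u = Fu$ is by construction the mild solution of (\ref{nonhomopara}) in $X$, which is the claimed existence statement; one should also record that $Fu$ indeed lies in $C([0,T];L^r(\Omega))$ and not merely $L^\infty(0,T;L^r(\Omega))$, which follows from the strong continuity of the semigroup $S_\Omega$ together with the dominated convergence / Duhamel structure of $G$ (the estimates already show the Duhamel integral converges absolutely), so the intersection space $X$ is the natural home for $u$. The main obstacle I anticipate is the exponent arithmetic: one must check at each step that the Hölder splittings are legitimate — i.e. that the conjugate exponents appearing (the numbers called $l$ in Proposition 3.1, and the analogous ones in the contraction estimate) are all $\ge 1$ and that the time-integrability power $nb_k/(2ps)$ stays below $1$ — which is exactly what the admissibility hypotheses and the inequality $\tfrac{b+1}{q} < \tfrac{nb}{2rs} < 1$ noted at the start of the proof of Proposition 3.1 guarantee, and which the Remark's choice of $p, r$ was engineered to make simultaneously valid for all $b = b_k$. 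Apart from that, everything is a routine application of Proposition 3.1 and the contraction principle.
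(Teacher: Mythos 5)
Your proposal is correct and follows essentially the same route as the paper: a contraction mapping argument on a small closed ball in $X$, using Proposition 3.1 to control the Duhamel term, the pointwise inequality $||z_1|^{b}z_1-|z_2|^{b}z_2|\leq C_b(|z_1|^{b}+|z_2|^{b})|z_1-z_2|$ for the Lipschitz estimate, and a smallness choice of the ball radius tied to the size of $g$. The only (immaterial) difference is bookkeeping: you apply Proposition 3.1 term by term with $b=b_k$, while the paper applies it once with $b=b_m$ and absorbs the lower-order terms via H\"older on the bounded space-time cylinder.
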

\begin{proof}
We will show $F$ is a contraction map on the complete metric space
$$X_M:=\{u\in X: ||u||_X\leq M\}.$$
Here the constant $M$ will be determined later, which depends on the norm of $g$.

Note that for $u\in X_M$, by Proposition 3.1 we have
$$||Fu||_X\leq CT^{1-\frac{nb_m}{2rs}}||g(x, t)- a(x,t,u(x,t))||_{L^{\frac{q}{b_m+1}}(0, T; L^\frac{p}{b_m+1}(\Omega))}.$$
Using H\"older inequalities we obtain
$$||a(x,t,u(x,t))||_{L^{\frac{q}{b_m+1}}(0, T; L^\frac{p}{b_m+1}(\Omega))}
\leq C'_{a, \Omega}||\sum^m_{k=1} ||u(t)||^{b_k+1}_{L^p(\Omega)}||_{L^\frac{q}{b_m+1}}$$
$$\leq C'_{a, \Omega, T}\sum^m_{k=1} ||u||^{b_k+1}_{L^q(0, T; L^p(\Omega))}
\leq C'_{a, \Omega, T}\sum^m_{k=1} M^{b_k+1}.$$
Hence we have
$$||Fu||_X\leq C''_{a, \Omega, T}(||g||_{L^{\frac{q}{b_m+1}}(0, T; L^\frac{p}{b_m+1}(\Omega))}+ \sum^m_{k=1} M^{b_k+1}).$$

Also note that
$$||z_2|^{b_k}z_2- |z_1|^{b_k}z_1|\leq C_k|z_2- z_1|(|z_1|^{b_k}+ |z_2|^{b_k})$$
so for $u_1, u_2\in X_M$, by Proposition 3.1 and H\"older inequalities
we get that 
$$||Fu_1- Fu_2||_X\leq CT^{1-\frac{nb_m}{2rs}}||a(x,t,u_2(x,t))- a(x,t,u_1(x,t))||_{L^{\frac{q}{b_m+1}}(0, T; L^\frac{p}{b_m+1}(\Omega))}$$
$$\leq C_{a, \Omega, T}||u_1- u_2||_{L^q(0, T; L^p(\Omega))}
\sum^m_{k=1}(||u_1||^{b_k}_{L^q(0, T; L^p(\Omega))}
+ ||u_2||^{b_k}_{L^q(0, T; L^p(\Omega))})$$
$$\leq 2C_{a, \Omega, T}||u_1- u_2||_X\sum^m_{k=1} M^{b_k}.$$

Now for sufficiently small $g$, we choose $M$ to be $||g||^{1/2}_{L^{\frac{q}{b+1}}(0, T; L^\frac{p}{b+1}(\Omega))}$. 

Then
$||Fu||_X\leq M$ for $u\in X_M$ and $||Fu_1- Fu_2||_X\leq \frac{1}{2}||u_1- u_2||_X$ for $u_1, u_2\in X_M$. We can estimate $(Fu)(x, t+h)- (Fu)(x, t)$ in similar ways to show the continuity of $Fu$ in $t$. Hence $F$ has a fix point
in $X_M$ by the fixed-point theorem.
\end{proof}

Note that $||(-\Delta)^s f||_{L^\infty(\mathbb{R}^n)}\leq C||f||_{C^2(\mathbb{R}^n)}$ for $f\in C^\infty_c(\mathbb{R}^n)$
(see for instance, Lemma 3.3 in \cite{li2021inverse}) so by Proposition 3.2 the problem
\begin{equation}
\left\{
\begin{aligned}
\partial_t w+ (-\Delta)^s w + a(x, t, w)&= -(-\Delta)^s g\quad \,\,\,\Omega\times (0, T)\\
w&= 0\qquad\qquad \,\,\,\,\,\,\,\Omega_e\times (0, T)\\
w&= 0\qquad\qquad \,\,\,\,\,\,\,\mathbb{R}^n\times \{0\}
\end{aligned}
\right.
\end{equation}
has a solution in $X$ for small $g\in C^\infty_c(\Omega_e\times (0, T))$. Then $u:= w+ g$ gives a solution of (\ref{fracparafrac}).

\subsection{$L^\infty$-estimate and uniqueness}
We first prove the following $L^\infty$-estimate, which will be used in the first order linearization later.

\begin{prop}
Suppose $u$ is a solution of the problem
\begin{equation}\label{fgpara}
\left\{
\begin{aligned}
\partial_t u+ (-\Delta)^s u + a(x, t, u)&= f
\quad \,\,\,\Omega\times (0, T)\\
u&= g\quad \,\,\,\Omega_e\times (0, T)\\
u&= 0\quad \,\,\,\mathbb{R}^n\times \{0\}.
\end{aligned}
\right.
\end{equation}
\end{prop}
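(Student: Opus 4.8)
The statement of Proposition 3.4 is incomplete in the excerpt, but from the header and the announced purpose ("$L^\infty$-estimate ... used in the first order linearization") the content should be: if $g \in C^\infty_c(\Omega_e \times (0,T))$ and $f = -(-\Delta)^s g$ (or more generally $f$ regular), then the solution $u$ of \eqref{fgpara} constructed above, written $u = w + g$ with $w \in X$ solving the homogeneous-exterior problem, satisfies an estimate of the form $\|u\|_{L^\infty(\mathbb{R}^n \times (0,T))} \le C\big(\|g\|_{C^2(\mathbb{R}^n \times [0,T])}\big)$, with $C$ small when $g$ is small; in particular $u \in L^\infty$. The plan is as follows.

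First I would reduce to $w$: since $g$ is smooth and compactly supported in the exterior, $\|g\|_{L^\infty} $ and $\|(-\Delta)^s g\|_{L^\infty}$ are controlled by $\|g\|_{C^2}$ (the bound $\|(-\Delta)^s f\|_{L^\infty} \le C\|f\|_{C^2}$ cited from \cite{li2021inverse}), so it suffices to bound $\|w\|_{L^\infty(\Omega \times (0,T))}$, as $w$ vanishes in $\Omega_e \times (0,T)$. Next, using the Duhamel representation $w = G\big(-(-\Delta)^s g - a(x,t,w)\big)$ together with Proposition 2.2 in the endpoint case $p = \infty$, I would estimate $\|w(t)\|_{L^\infty(\Omega)}$ by splitting the time integral and applying the smoothing bound $\|S_\Omega(t-\tau)h\|_{L^\infty} \le C(t-\tau)^{-\frac{n}{2sr}}\|h\|_{L^r}$: the source term $(-\Delta)^s g$ is already in $L^\infty$, and the nonlinear term $a(x,t,w) = \sum_k a_k |w|^{b_k} w$ is estimated using $\|w\|_X \le M$ from Proposition 3.2, so that $\|a(t,w(t))\|_{L^{p/(b_m+1)}}$ is integrable against the kernel exactly as in the contraction argument. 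The exponent bookkeeping here is the same one ($1 - \frac{nb}{2rs} > 0$) already verified in Proposition 3.1, so the time integrals converge.

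The main obstacle is handling the $L^\infty$ endpoint in the smoothing estimate cleanly: Proposition 2.2 allows $p = \infty$, but one must check that the resulting time-singularity $(t-\tau)^{-n/(2sr)}$ is integrable, which forces the constraint $r > n/(2s)$ — and this is guaranteed by the choice of $r$ fixed in the Remark following Proposition 3.1 ($r > \max\{nb_m/2s,\, 2(b_m+1)\}$), so there is enough room. A secondary technical point is that one wants the estimate uniform in $t \in (0,T)$, which follows because $T^{1 - nb/(2rs)}$ is a positive power of $T$; and smallness of the bound for small $g$ follows by taking $M = \|g\|_X^{1/2}$ as before, making the nonlinear contribution lower order. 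Finally, if the intended statement also asserts Hölder-in-space regularity $u(t) \in C^s(\mathbb{R}^n)$ for $t > 0$, this is inherited from the regularizing property of $S_\Omega(t)$ recorded in Section 2.2 (from \cite{fernandez2016boundary}) applied termwise in the Duhamel formula, once $L^\infty$ control of the source $f - a(x,t,u)$ is in hand. I would present the $L^\infty$ bound as the core computation and treat the Hölder upgrade, if present, as a short corollary of the semigroup regularity already quoted.
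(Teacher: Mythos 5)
The statement you were shown is truncated by a typo in the source (the conclusion was typed just after the \verb|proposition| environment closes), but the actual claim is the explicit comparison estimate
\[
\|u\|_{L^\infty}\;\le\; T\,\|f\|_{L^\infty(\Omega\times(0,T))}\;+\;\|g\|_{L^\infty(\Omega_e\times(0,T))},
\]
and the paper proves it by a maximum-principle argument, not by Duhamel and semigroup smoothing. Concretely: one takes a cutoff $\phi\in C^\infty_c(\mathbb{R}^n)$ with $0\le\phi\le1$, $\phi\equiv1$ on $\bar\Omega$, sets $\tilde\phi(x,t)=(\|f\|_{L^\infty}t+\|g\|_{L^\infty})\phi(x)$, observes $(-\Delta)^s\phi\ge0$ in $\Omega$ so that $\tilde\phi$ is a supersolution, and then shows $\tilde u:=\tilde\phi-u\ge0$ by proving that $E(t):=\|\tilde u^-(t)\|_{L^2(\Omega)}^2$ is nonincreasing, using the bilinear form of $(-\Delta)^s$ and the fact that $z\mapsto a(x,t,z)=\sum_k a_k|z|^{b_k}z$ with $a_k\ge0$ is monotone, so $a(x,t,\tilde\phi)-a(x,t,u)$ has the same sign as $\tilde u$. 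The same argument applied to $\tilde\phi+u$ gives the lower bound. Your proposal misses the essential structural input — the sign and monotonicity of the nonlinearity — which is what makes the estimate linear in $\|f\|_{L^\infty}$ and $\|g\|_{L^\infty}$ with \emph{no} contribution from $a$ at all.

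This is not merely a different route; the Duhamel approach would not deliver the estimate that the paper actually needs downstream. In Proposition 4.1 and in the proof of Theorem 1.1 the bound is used in the precise forms $\|u_{\lambda,g}\|_{L^\infty}\le\lambda\|g\|_{L^\infty}$ and $\|v_{\lambda,g}\|_{L^\infty}\le\frac{T}{\lambda}\|a(x,t,u_{\lambda,g})\|_{L^\infty}$, i.e.\ with constant exactly $1$ in front of $\|g\|_{L^\infty}$ and exactly $T$ in front of $\|f\|_{L^\infty}$, and with the nonlinearity contributing nothing; a fixed-point/smoothing bound would instead produce extra terms in $M^{b_k+1}$ and constants depending on the exponents. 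Moreover, the endpoint bookkeeping you rely on does not close with the exponents fixed in the Remark after Proposition 3.1: passing $S_\Omega(t-\tau)$ from $L^{p/(b_m+1)}$ to $L^\infty$ costs $(t-\tau)^{-\frac{n}{2s}\cdot\frac{b_m+1}{p}}$, whose time-integrability requires $p>\frac{n(b_m+1)}{2s}$, and estimating $\|a(\tau,u(\tau))\|_{L^r}$ would require control of $\|u(\tau)\|_{L^{r(b_k+1)}}$, which the space $X$ does not provide since $p<r(b_1+1)$. So the proposed argument both targets a different (weaker) statement and has genuine analytic gaps; the paper's comparison-principle proof avoids all of this.
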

Then we have
$$||u||_{L^\infty}\leq T||f||_{L^\infty(\Omega\times (0, T))}
+ ||g||_{L^\infty(\Omega_e\times (0, T))}.$$
\begin{proof}
We fix $\phi\in C^\infty_c(\mathbb{R}^n)$ s.t. $0\leq \phi\leq 1$
and $\phi= 1$ on $\bar\Omega \cup \bar W$. We define
$$\tilde{\phi}(x, t):= (||f||_{L^\infty(\Omega\times (0, T))}t
+ ||g||_{L^\infty(\Omega_e\times (0, T))})\phi(x).$$
Clearly $(-\Delta)^s\phi\geq 0$ in $\Omega$ from the pointwise definition 
of $(-\Delta)^s$ so $$\partial_t\tilde{\phi}+(-\Delta)^s\tilde{\phi}+ a(x, t, \tilde{\phi})\geq ||f||_{L^\infty(\Omega\times (0, T))}$$
in $\Omega\times (0, T)$. Now we consider $\tilde{u}:= \tilde{\phi}- u$.
Note that $\tilde{u}\geq 0$ in 
$\Omega_e\times (0, T)$, $\tilde{u}\geq 0$ at $t= 0$ and 
$$\partial_t\tilde{u}+(-\Delta)^s\tilde{u}+ a(x, t, \tilde{\phi})
- a(x, t, u)\geq 0$$
in $\Omega\times (0, T)$. Write $\tilde{u}= \tilde{u}^+ - \tilde{u}^-$
where $\tilde{u}^{\pm}= \max\{\pm \tilde{u}, 0\}$. Then $\tilde{u}^-= 0$ in $\Omega_e\times (0, T)$. 

Now we define $E(t):= ||\tilde{u}^-(t)||^2_{L^2(\Omega)}
\geq 0$. Then 
$E(0)= 0$ and
$$E'(t)= -2\langle \partial_t\tilde{u}(t), \tilde{u}^-(t)\rangle
\leq 2\langle (-\Delta)^s\tilde{u}(t), \tilde{u}^-(t)\rangle
+ 2\langle a(x, t, \tilde{\phi})- a(x, t, u), \tilde{u}^-(t)\rangle.$$
Note that
$$\langle (-\Delta)^s\tilde{u}(t), \tilde{u}^-(t)\rangle=
\iint\frac{(\tilde{u}(t)(x)- \tilde{u}(t)(y))(\tilde{u}^-(t)(x)- \tilde{u}^-(t)(y))}{|x-y|^{n+2s}}dxdy.$$
Since $(\tilde{u}^+(t)(x)- \tilde{u}^+(t)(y))(\tilde{u}^-(t)(x)- \tilde{u}^-(t)(y))\leq 0$, we have 
$$\langle (-\Delta)^s\tilde{u}(t), \tilde{u}^-(t)\rangle
\leq -\iint\frac{(\tilde{u}^-(t)(x)- \tilde{u}^-(t)(y))^2}{|x-y|^{n+2s}}dxdy\leq 0.$$
Also note that $a(x, t, \tilde{\phi})- a(x, t, u)$ have the same sign as $\tilde{u}$. Hence
we have $E'(t)\leq 0$ so the only possibility is $E(t)= 0$ and thus $\tilde{\phi}\geq u$ in $\Omega\times (0, T)$. 

Also note that $a(x, t, \tilde{\phi})+ a(x, t, u)$ have the same sign as $\tilde{\phi}+ u$ so similarly we can consider $\tilde{u}:= \tilde{\phi}+ u$ and show $\tilde{\phi}\geq -u$ in $\Omega\times (0, T)$. Hence we have
$|u|\leq \tilde{\phi}$ in $\Omega\times (0, T)$.
\end{proof}

\begin{remark}
In fact, we can use similar arguments to show the uniqueness of solutions of (\ref{fgpara}). 
Suppose $u_1, u_2$ are two solutions of (\ref{fgpara}). Let $\tilde{u}:= u_1- u_2$. Then
$\tilde{u}= 0$ in 
$\Omega_e\times (0, T)$, $\tilde{u}= 0$ at $t= 0$ and 
$$\partial_t\tilde{u}+(-\Delta)^s\tilde{u}+ a(x, t, u_1)
- a(x, t, u_2)= 0$$
in $\Omega\times (0, T)$. Since $a(x, t, u_1)
- a(x, t, u_2)$ has the same sign as $\tilde{u}$, we can show that 
$$E'(t)= -2\langle \partial_t\tilde{u}(t), \tilde{u}^-(t)\rangle
= 2\langle (-\Delta)^s\tilde{u}(t), \tilde{u}^-(t)\rangle
+ 2\langle a(x, t, u_1)- a(x, t, u_2), \tilde{u}^-(t)\rangle\leq 0$$
where
$E(t):= ||\tilde{u}^-(t)||^2_{L^2(\Omega)}$ as before to conclude that $u_1\geq u_2$.
 Similarly we consider $\tilde{u}= u_2- u_1$ to prove $u_2\geq u_1$. Hence $u_1= u_2$. 
\end{remark}

In particular, we conclude that (\ref{fracparafrac}) has a unique solution.

\section{The inverse problem}
The well-posedness result in the previous section ensures that the Dirichlet-to-Neumann map
$$\Lambda_{a}g:= (-\Delta)^s u|_{\Omega_e\times (0, T)}$$
is well-defined at least for regular and small $g$.

\begin{remark}
It has been shown that the knowledge of $\Lambda_{a}$ is equivalent to the knowledge of the non-local Neumann operator $\mathcal{N}_su$ (see for instance, \cite{ghosh2020calderon}), which is defined by
$$\mathcal{N}_su(t)(x):= 
c_{n,s}
\int_{\Omega}\frac{u(t)(x)-u(t)(y)}{|x-y|^{n+2s}}\,dy,\quad 
x\in \Omega_e, t\in (0, T).$$ 
Recall that for the inverse problem for the classical semi-linear parabolic operator (see for instance, \cite{kian2020recovery}), the 
Dirichlet-to-Neumann map was defined  by
$$\Lambda_{a}: g\to \partial_\nu u|_{\partial\Omega\times (0, T)}$$
where $\partial_\nu u$ is the classical Neumann derivative associated with the 
initial boundary value problem
\begin{equation*}
\left\{
\begin{aligned}
\partial_t u -\Delta u + a(x, t, u)&= 0\quad \,\,\,\Omega\times (0, T)\\
u&= g\,\,\,\,\, \partial\Omega\times (0, T)\\
u&= 0\quad \,\,\,\Omega\times \{0\}.
\end{aligned}
\right.
\end{equation*}
Hence our inverse problem can be viewed as a natural non-local analogue of the classical problem.
\end{remark}

We will apply the linearization scheme, which enables us to use the Runge approximation property for the linear fractional parabolic operator to deal with the inverse problem for the 
semi-linear fractional parabolic operator.
 
\subsection{Linearization}
For $g\in C^\infty_c(\Omega_e\times (0, T))$ and small $\lambda> 0$, we use 
$u_g$ to denote the solution of the linear problem (\ref{linearpara}) and we use $u_{\lambda, g}$ to denote the solution of the semi-linear problem
\begin{equation}\label{lambdafrac}
\left\{
\begin{aligned}
\partial_t u+ (-\Delta)^s u + a(x, t, u)&= 0\quad \,\,\,\Omega\times (0, T)\\
u&= \lambda g\,\,\,\,\, \Omega_e\times (0, T)\\
u&= 0\quad \,\,\,\mathbb{R}^n\times \{0\}.
\end{aligned}
\right.
\end{equation}

\begin{prop}
Let $v_{\lambda, g}:= u_g- \frac{u_{\lambda, g}}{\lambda}$. Then $\lim_{\lambda\to 0}v_{\lambda, g}= 0$
in $L^\infty(\Omega\times (0, T))$.
\end{prop}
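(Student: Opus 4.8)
The plan is to set $w_{\lambda,g} := u_{\lambda,g} - \lambda u_g$, so that $v_{\lambda,g} = -w_{\lambda,g}/\lambda$, and to show $\|w_{\lambda,g}\|_{L^\infty(\Omega\times(0,T))} = o(\lambda)$ as $\lambda \to 0$. Subtracting the equation $\partial_t u_g + (-\Delta)^s u_g = 0$ (multiplied by $\lambda$) from the equation for $u_{\lambda,g}$, we find that $w_{\lambda,g}$ solves the problem of type (\ref{fgpara}) with zero exterior data, zero initial data, and source term $f = -a(x,t,u_{\lambda,g})$; more precisely $\partial_t w_{\lambda,g} + (-\Delta)^s w_{\lambda,g} + a(x,t,u_{\lambda,g}) = 0$ in $\Omega\times(0,T)$. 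Applying the $L^\infty$-estimate of Proposition 3.4 to $w_{\lambda,g}$ (note the exterior data vanishes), we obtain
$$\|w_{\lambda,g}\|_{L^\infty} \leq T\,\|a(x,t,u_{\lambda,g})\|_{L^\infty(\Omega\times(0,T))}.$$

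First I would control $\|u_{\lambda,g}\|_{L^\infty}$. Again by Proposition 3.4 applied to $u_{\lambda,g}$ itself (with $f = 0$ and exterior data $\lambda g$), we get $\|u_{\lambda,g}\|_{L^\infty} \leq \lambda \|g\|_{L^\infty(\Omega_e\times(0,T))} = C_g \lambda$. Then, using the explicit form (\ref{fracpower}) of the nonlinearity and $0 \leq a_k \in C(\bar\Omega\times[0,T])$, we estimate pointwise
$$|a(x,t,u_{\lambda,g})| \leq \sum_{k=1}^m \|a_k\|_{L^\infty} |u_{\lambda,g}|^{b_k+1} \leq \sum_{k=1}^m \|a_k\|_{L^\infty} (C_g\lambda)^{b_k+1} = C_{a,g}\sum_{k=1}^m \lambda^{b_k+1}.$$
Combining the two displays,
$$\|v_{\lambda,g}\|_{L^\infty} = \frac{1}{\lambda}\|w_{\lambda,g}\|_{L^\infty} \leq \frac{T C_{a,g}}{\lambda}\sum_{k=1}^m \lambda^{b_k+1} = T C_{a,g}\sum_{k=1}^m \lambda^{b_k} \xrightarrow{\lambda\to 0} 0,$$
since every $b_k > 0$.

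The only subtlety I anticipate is bookkeeping around the $L^\infty$-estimate: one must verify that Proposition 3.4 genuinely applies to $w_{\lambda,g}$ — i.e. that $w_{\lambda,g}$, built from the two well-posed solutions $u_{\lambda,g}$ (in the space $X$, as produced in Section 3) and $u_g$, is an admissible solution in the sense required there, with the source $a(x,t,u_{\lambda,g}) \in L^\infty$. Since $u_{\lambda,g} \in L^\infty$ by the estimate above and $a$ depends continuously on its arguments with bounded coefficients, the source is bounded and the maximum-principle argument underlying Proposition 3.4 goes through verbatim. Everything else is the elementary power-counting shown above; no Runge approximation or linearization machinery beyond the already-established $L^\infty$-bound is needed for this particular statement.
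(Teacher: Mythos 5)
Your proposal is correct and follows essentially the same route as the paper: the paper applies the $L^\infty$-estimate directly to $v_{\lambda,g}$, which satisfies the linear equation with source $\frac{1}{\lambda}a(x,t,u_{\lambda,g})$, whereas you apply it to the rescaled function $w_{\lambda,g}=-\lambda v_{\lambda,g}$ and divide by $\lambda$ at the end --- a cosmetic difference. The two key ingredients (the bound $\|u_{\lambda,g}\|_{L^\infty}\leq\lambda\|g\|_{L^\infty}$ and the resulting power count $\sum_k\lambda^{b_k}\to 0$) are identical to those in the paper's argument.
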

\begin{proof}
Note that $v_{\lambda, g}= 0$ in $\Omega_e\times (0, T)$ and
$$\partial_t v_{\lambda, g}+ (-\Delta)^s v_{\lambda, g}= \frac{1}{\lambda}
a(x, t, u_{\lambda, g})$$
in $\Omega\times (0, T)$. By the Proposition 3.3, we have
$$||v_{\lambda, g}||_{L^\infty}\leq \frac{T}{\lambda}
||a(x, t, u_{\lambda, g})||_{L^\infty(\Omega\times (0, T))}$$
and $||u_{\lambda, g}||_{L^\infty}\leq 
\lambda||g||_{L^\infty(\Omega_e\times (0, T))}$ so
$$||v_{\lambda, g}||_{L^\infty}\leq 
T\sum^m_{k=1}\lambda^{b_k}||a_k(x,t)||_{L^\infty}||g||^{b_k+1}_{L^\infty(\Omega_e\times (0, T))},$$
which implies $||v_{\lambda, g}||_{L^\infty}\to 0$ as $\lambda\to 0$.
\end{proof}

\subsection{Proof of the main theorem}
Now we are ready to prove Theorem 1.1.
\begin{proof}
Since we have the assumption
$$\Lambda_{a^{(1)}}g|_{W_2\times (0, T)}
= \Lambda_{a^{(2)}}g|_{W_2\times (0, T)}$$
for small $g\in C^\infty_c(W_1\times (0, T))$ and 
$u^{(1)}_{\lambda, g}= u^{(2)}_{\lambda, g}= \lambda g$ in $\Omega_e\times (0, T)$,
for each $t$ we have
$$(-\Delta)^s(u^{(1)}_{\lambda, g}- u^{(2)}_{\lambda, g})(t)|_{W_2}= 0, \qquad (u^{(1)}_{\lambda, g}- u^{(2)}_{\lambda, g})(t))|_{\Omega_e}= 0$$
for $g\in C^\infty_c(W_1\times (0, T))$ and small $\lambda> 0$ so Proposition 2.3 implies 
$u^{(1)}_{\lambda, g}= u^{(2)}_{\lambda, g}=:  u_{\lambda, g}$ in $\mathbb{R}^n\times (0, T)$. Hence we have
$$a^{(1)}(x, t, u_{\lambda, g}(x,t))
= a^{(2)}(x, t, u_{\lambda, g}(x,t))$$
\begin{equation}\label{aRid}
(a^{(1)}_1(x, t)- a^{(2)}_1(x, t))|u_{\lambda, g}|^{b_1}u_{\lambda, g}
= R^{(2)}_1(x, t, u_{\lambda, g})
- R^{(1)}_1(x, t, u_{\lambda, g})
\end{equation}
in $\Omega\times (0, T)$ where
$$R^{(i)}_j(x, t, z):= \sum^m_{k=j+1}a^{(i)}_k(x, t)|z|^{b_k}z.$$

Now note that
$$|||a^{(1)}_1(x,t)- a^{(2)}_1(x,t)|^\frac{1}{b_1+1}||_{L^2(\Omega\times 
(0, T))}$$
$$\leq |||a^{(1)}_1(x, t)- a^{(2)}_1(x, t)|^\frac{1}{b_1+1}(1- \frac{u_{\lambda, g}}{\lambda})||_{L^2(\Omega\times (0, T)}$$
$$+ \frac{1}{\lambda}|||a^{(1)}_1(x,t)- a^{(2)}_1(x,t)|^\frac{1}{b_1+1}
u_{\lambda, g}||_{L^2(\Omega\times (0, T))}$$
$$\leq |||a^{(1)}_1(x, t)- a^{(2)}_1(x, t)|^\frac{1}{b_1+1}||_{L^\infty}||1- \frac{u_{\lambda, g}}{\lambda}||_{L^2(\Omega\times (0, T))}$$
\begin{equation}\label{2inftyest}
+ \frac{1}{\lambda}|||a^{(1)}_1(x,t)- a^{(2)}_1(x,t)|^\frac{1}{b_1+1}
u_{\lambda, g}||_{L^2(\Omega\times (0, T))}.
\end{equation}
For given $\delta> 0$, by Proposition 2.4 we can choose 
$g\in C^\infty_c(W_1\times (0, T))$ s.t. 
$$||1- u_g||_{L^2(\Omega\times (0, T))}\leq \delta$$ 
and for this chosen $g$, we have 
\begin{equation}\label{Rungelinear}
||1- \frac{u_{\lambda, g}}{\lambda}||_{L^2(\Omega\times (0, T))}\leq 2\delta
\end{equation}
for small $\lambda$ by Proposition 4.1. Since Proposition 3.3 implies that
$$||u_{\lambda, g}||_{L^\infty}\leq 
\lambda||g||_{L^\infty(\Omega_e\times (0, T))},$$
by (\ref{aRid}) we have
$$\frac{1}{\lambda}|||a^{(1)}_1(x,t)- a^{(2)}_1(x,t)|^\frac{1}{b_1+1}
u_{\lambda, g}||_{L^2(\Omega\times (0, T))}$$
$$\leq 
\frac{C}{\lambda}|||R^{(2)}_1(x, t, u_{\lambda, g})
- R^{(1)}_1(x, t, u_{\lambda, g})|^{\frac{1}{b_1+1}}||_{L^\infty(\Omega\times (0, T))}$$
$$\leq C'(\sum^m_{k=2}\lambda^{\frac{b_k-b_1}{b_1+1}}(||a^{(1)}_k||^
\frac{1}{b_1+1}_{L^\infty}+||a^{(2)}_k||^
\frac{1}{b_1+1}_{L^\infty})||g||^\frac{b_k+1}{b_1+1}_{L^\infty}).$$
This inequality implies
$$\frac{1}{\lambda}|||a^{(1)}_1(x,t)- a^{(2)}_1(x,t)|^\frac{1}{b_1+1}
u_{\lambda, g}||_{L^2(\Omega\times (0, T))}\to 0$$
as $\lambda\to 0$. Then by (\ref{2inftyest}) and (\ref{Rungelinear}) we obtain
$$|||a^{(1)}_1(x,t)- a^{(2)}_1(x,t)|^\frac{1}{b_1+1}||_{L^2(\Omega\times (0, T))}\leq 
2\delta |||a^{(1)}_1(x,t)- a^{(2)}_1(x,t)|^\frac{1}{b_1+1}||_{L^\infty(\Omega\times (0, T))}.$$
Now we conclude that $a^{(1)}_1= a^{(2)}_1$ since $\delta$ is arbitrary.

Iteratively, once we have shown $a^{(1)}_j= a^{(2)}_j$ ($1\leq j\leq m'-1$), we have 
$$(a^{(1)}_{m'}(x, t)- a^{(2)}_{m'}(x, t))|u_{\lambda, g}|^{b_{m'}}
u_{\lambda, g}
= R^{(2)}_{m'}(x, t, u_{\lambda, g})
- R^{(1)}_{m'}(x, t, u_{\lambda, g})$$
in $\Omega\times (0, T)$. Also note that
$$|||a^{(1)}_{m'}(x,t)- a^{(2)}_{m'}(x,t)|^\frac{1}{b_{m'}+1}||_{L^2(\Omega\times 
(0, T))}$$
$$\leq |||a^{(1)}_{m'}(x, t)- a^{(2)}_{m'}(x, t)|^\frac{1}{b_{m'}+1}||_{L^\infty}||1- \frac{u_{\lambda, g}}{\lambda}||_{L^2(\Omega\times (0, T))}$$
$$+ \frac{1}{\lambda}|||a^{(1)}_{m'}(x,t)- a^{(2)}_{m'}(x,t)|^\frac{1}{b_{m'}+1}
u_{\lambda, g}||_{L^2(\Omega\times (0, T))}.$$
For given $\delta> 0$, we can choose 
$g\in C^\infty_c(W_1\times (0, T))$ s.t. 
$$||1- \frac{u_{\lambda, g}}{\lambda}||_{L^2(\Omega\times (0, T))}\leq 2\delta$$
for small $\lambda$ and we also have
$$\frac{1}{\lambda}|||a^{(1)}_{m'}(x,t)- a^{(2)}_{m'}(x,t)|^\frac{1}{b_{m'}+1}
u_{\lambda, g}||_{L^2(\Omega\times (0, T))}$$
$$\leq 
\frac{C}{\lambda}|||R^{(2)}_{m'}(x, t, u_{\lambda, g})
- R^{(1)}_{m'}(x, t, u_{\lambda, g})|^{\frac{1}{b_{m'}+1}}||_{L^\infty(\Omega\times (0, T))}$$
$$\leq C'(\sum^m_{k={m'}+1}\lambda^{\frac{b_k-b_{m'}}{b_{m'}+1}}(||a^{(1)}_k||^
\frac{1}{b_{m'}+1}_{L^\infty}+||a^{(2)}_k||^
\frac{1}{b_{m'}+1}_{L^\infty})||g||^\frac{b_k+1}{b_{m'}+1}_{L^\infty}).$$
Now let $\lambda\to 0$. Then we get
$$|||a^{(1)}_{m'}(x,t)- a^{(2)}_{m'}(x,t)|^\frac{1}{b_{m'}+1}||_{L^2(\Omega\times (0, T))}\leq 
2\delta |||a^{(1)}_{m'}(x,t)- a^{(2)}_{m'}(x,t)|^\frac{1}{b_{m'}+1}||_{L^\infty(\Omega\times (0, T))}.$$
Now we conclude that $a^{(1)}_{m'}= a^{(2)}_{m'}$ since $\delta$ is arbitrary.
\end{proof}

\bibliographystyle{plain}
{\small\bibliography{Reference4}}
\end{document}